\begin{document}

\newtheorem{theorem}[subsection]{Theorem}
\newtheorem{proposition}[subsection]{Proposition}
\newtheorem{lemma}[subsection]{Lemma}
\newtheorem{corollary}[subsection]{Corollary}
\newtheorem{conjecture}[subsection]{Conjecture}
\newtheorem{prop}[subsection]{Proposition}
\numberwithin{equation}{section}
\newcommand{\mr}{\ensuremath{\mathbb R}}
\newcommand{\mc}{\ensuremath{\mathbb C}}
\newcommand{\dif}{\mathrm{d}}
\newcommand{\intz}{\mathbb{Z}}
\newcommand{\ratq}{\mathbb{Q}}
\newcommand{\natn}{\mathbb{N}}
\newcommand{\comc}{\mathbb{C}}
\newcommand{\rear}{\mathbb{R}}
\newcommand{\prip}{\mathbb{P}}
\newcommand{\uph}{\mathbb{H}}
\newcommand{\fief}{\mathbb{F}}
\newcommand{\majorarc}{\mathfrak{M}}
\newcommand{\minorarc}{\mathfrak{m}}
\newcommand{\sings}{\mathfrak{S}}
\newcommand{\fA}{\ensuremath{\mathfrak A}}
\newcommand{\mn}{\ensuremath{\mathbb N}}
\newcommand{\mq}{\ensuremath{\mathbb Q}}
\newcommand{\half}{\tfrac{1}{2}}
\newcommand{\f}{f\times \chi}
\newcommand{\summ}{\mathop{{\sum}^{\star}}}
\newcommand{\chiq}{\chi \bmod q}
\newcommand{\chidb}{\chi \bmod db}
\newcommand{\chid}{\chi \bmod d}
\newcommand{\sym}{\text{sym}^2}
\newcommand{\hhalf}{\tfrac{1}{2}}
\newcommand{\sumstar}{\sideset{}{^*}\sum}
\newcommand{\sumprime}{\sideset{}{'}\sum}
\newcommand{\sumprimeprime}{\sideset{}{''}\sum}
\newcommand{\shortmod}{\ensuremath{\negthickspace \negthickspace \negthickspace \pmod}}
\newcommand{\V}{V\left(\frac{nm}{q^2}\right)}
\newcommand{\sumi}{\mathop{{\sum}^{\dagger}}}
\newcommand{\mz}{\ensuremath{\mathbb Z}}
\newcommand{\leg}[2]{\left(\frac{#1}{#2}\right)}
\newcommand{\muK}{\mu_{\omega}}
\newcommand{\sumflat}{\sideset{}{^\flat}\sum}

\title[Average values of quadratic Hecke character sums]{Average values of quadratic {H}ecke character sums}

\author{Peng Gao and Liangyi Zhao}

\begin{abstract}
 We study certain smoothed character sums involving $\sum_{m,n}\leg {m}{n}_2$, where $\leg {\cdot}{n}_2$ denotes the quadratic symbol in the Gaussian field.
  We extend some previously known results to obtain asymptotic formulas for the sums considered to larger ranges of $m$ and $n$.
\end{abstract}

\maketitle

\noindent {\bf Mathematics Subject Classification (2010)}: 11L05, 11L40, 11R11 \newline

\noindent {\bf Keywords}: character sums, mean values, quadratic Hecke chracter
\section{Introduction}

 An important theme in analytic number theory is the
estimations of character sums. The classical P\'olya-Vinogradov inequality (see for example \cite[Chap. 23]{Da}) states
that for any non-principal Dirichlet character $\chi$ modulo $q$ and $M \in \mz$, $N \in \mn$, we have
\begin{align*}
   \sum_{M < n \leq M+N} \chi(n) \ll q^{1/2}\log q.
\end{align*}
The above estimate has many interesting applications including the study of least quadratic non-residues, least primitive roots and primes in
arithmetic progressions. \newline

  Another important topic studied in analytic number theory is the asymptotic behaviors of mean values. In view of the P\'olya-Vinogradov inequality,
we see that it is not possible to obtain an asymptotic formula for a single character sum. In order to obtain such formulas, we need to consider extra averagings.  \newline

  In the case of quadratic Dirichlet character sums, we are led to consider the following sum:
\begin{align}
\label{S2}
  S(X,Y)=\sum_{\substack {m \leq X \\ (m, 2)=1}}\sum_{\substack {n \leq Y \\ (n, 2)=1}} \leg {m}{n}.
\end{align}

   Although it is relatively easy to obtain an asymptotic formula of $S(X, Y)$ if $Y=o(X/\log X)$ or $X=o(Y/\log Y)$ with the help of
the P\'olya-Vinogradov inequality, the situation becomes much more subtle and delicate if $X$ and $Y$ are of comparable size.
Using a Poisson summation formula developed in \cite{sound1}, J. B. Conrey, D. W. Farmer and K. Soundararajan \cite{CFS} succeeded
in obtaining an asymptotic formula of $S(X, Y)$ for all $X$ and $Y$. \newline

Thus one is motiviated by the result of Conrey, Farmer and Soundararajan to study similar character sums.   For example, let $K=\mq(i)$ be the Gaussian field and $\mathcal{O}_K=\mz[i]$ the ring of integers in $K$. For every $c \in \mathcal{O}_K$ such that $(c, 1+i)=1$, let $\leg {\cdot}{c}_2$ be the quadratic
residue symbols defined in Section \ref{sec2.4}.  Suppose that $\Phi(t), W(t)$ are two non-negative smooth functions compactly supported on $\mr^+$, we define
\begin{align}
\label{SXY}
  S_2(X,Y;\Phi, W) =\sum_{n \equiv 1 \bmod {(1+i)^3}}\sum_{(m, 1+i)=1} \leg {(1+i)m}{n}_2\Phi \left(\frac {N(n)}{Y} \right) W \left( \frac {N(m)}{X} \right).
\end{align}

   A similar expression without the appearance of $1+i$ in the quadratic symbol above has been studied previously by the authors in \cite{G&Zhao2019}. An asymptotic formula is obtained for such an expression, which is valid when $Y = o(X)$ and $X = o(Y^{2-\varepsilon})$ for any $\varepsilon>0$. \newline

  Our goal in this paper is to further continue our investigation on $S_2(X,Y; \Phi, W)$, aiming at obtaining a valid asymptotic formula for all large $Y$ going up to $X$.
The new input here is the method used by  K. Soundararajan and M. P. Young in \cite{S&Y}.
After using Poisson summation first to transform one of the sums in $S_2$ into a dual sum, we shall follow the approach in \cite{S&Y} to treat
the resulting sums using multiple Dirichlet series. This allows us to better control the emerging error terms.  \newline

 Before stating the result, we note that, as pointed out in \cite{G&Zhao2019}, one can regard $\leg {m}{\cdot}_2$ as a Hecke character $\chi_m \pmod {16m}$
of trivial infinite type when $(m, 1+i)=1$, thus justifying our view of $S_2$ as Hecke character sums.  Moreover, the factor $\leg {1+i}{n}_2$ is inserted in the expression of $S_2$ for a purely technical reason.  One can obtain a similar asymptotic formula for $S_2$ without this factor.  \newline

  Our main result is as follows.
\begin{theorem}
\label{quadraticmean}
   Let $\Phi$ and $W$ be two non-negative, smooth, compactly supported functions on $\mr^{+}$. Let $S_2(X,Y;\Phi, W)$ be defined in \eqref{SXY}.
For large $X$ and  $Y$ with $X \geq Y$, $\theta=131/416$, we have for any $\varepsilon>0$,
\begin{align}
\label{S}
\begin{split}
   S_2(X,Y;\Phi, W) =& C(\Phi, W)XY^{1/2} +O \left (XY^{\theta/2}+ XY^{1/2} \left( \frac {Y}{X} \right)^{1+\varepsilon} \right),
\end{split}
\end{align}
  where $C(\Phi, W)$ is given in \eqref{C}.
\end{theorem}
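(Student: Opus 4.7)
The plan is to begin by applying a Poisson-type summation formula in the variable $m$, which for each fixed $n$ transforms the smoothly weighted sum $\sum_m \leg{(1+i)m}{n}_2\, W(N(m)/X)$ over Gaussian integers $m$ coprime to $1+i$ into a dual sum of the shape $\frac{X}{N(n)}\sum_k g_k(n)\,\widetilde W\!\bigl(k\sqrt{X}/n\bigr)$, where $g_k(n)$ is a quadratic Gauss sum on $\mathcal O_K$ and $\widetilde W$ is a two-dimensional Fourier-type transform of $W$. The zero frequency $k=0$ produces the main contribution: $g_0(n)$ vanishes unless $\leg{\cdot}{n}_2$ is principal, i.e.\ unless $n$ is, up to units, a perfect square, and summing over $n = r^2$ against $\Phi(N(r^2)/Y)$ yields the advertised main term $C(\Phi, W)\,XY^{1/2}$ after a standard Mellin computation that pins down the constant in \eqref{C}.

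The remaining task is to bound the contribution of $k \neq 0$ following the strategy of Soundararajan and Young \cite{S&Y}. Writing $n = \ell r^2$ with $\ell$ squarefree, the Gauss sum $g_k(n)$ essentially factorizes and exposes a quadratic Hecke character $\leg{\cdot}{\ell}_2$ evaluated at (a twist of) $k$. I would then apply the Mellin transform of $\Phi$ to convert the remaining sum over $\ell$ into a contour integral of a multiple Dirichlet series built from Hecke $L$-functions $L(s, \chi_k)$ over $K = \mq(i)$. Shifting the contour down to the critical line $\mathrm{Re}(s) = 1/2$, identifying and analyzing any polar contributions (these should refine rather than disturb the main term, since they will match the $k=0$ computation or cancel against twin Dirichlet series), and then estimating the remaining line integral is the core of the argument.

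The main obstacle, and the source of the specific exponent $\theta = 131/416$, is the estimation of this critical-line integral. Its size is controlled by the quality of the available hybrid bound on $|L(\tfrac12+it, \chi_\ell)|$ as $\ell$ ranges over squarefree Gaussian integers of norm near $Y$ and $t$ ranges over a bounded interval; the exponent $\theta$ encodes the best such subconvex input that can be transported to the Gaussian-field setting. After dyadic decomposition in both $k$ and $\ell$, and integration by parts in $t$ to exploit the smoothness of $\Phi$ and $W$, this input delivers the term $XY^{\theta/2}$ in the error.

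The secondary error $XY^{1/2}(Y/X)^{1+\varepsilon}$ reflects the regime where $Y$ is close to $X$: in this range the effective dual sum has many frequencies $k$ of size $\ll \sqrt{Y/X}$ and the subconvexity mechanism is weaker than a direct estimate, so one instead applies a P\'olya--Vinogradov type bound to this short sum, which produces precisely the displayed secondary error. Combining the main term extracted from $k=0$, the subconvexity-driven bound on the remaining $k \neq 0$ contribution, and this boundary estimate yields \eqref{S}.
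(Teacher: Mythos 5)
Your opening move (Poisson summation in $m$, with the $k=0$ frequency producing the main term) matches the paper, and your general plan for $k\neq 0$ --- Mellin transforms feeding a multiple Dirichlet series in the style of Soundararajan--Young --- is also the right one. But there is a genuine error in where you locate the exponent $\theta=131/416$. It is not a subconvexity exponent for $L(\tfrac12+it,\chi_\ell)$, and it does not come from the critical-line integral in the $k\neq 0$ analysis. It arises entirely in the zero-frequency term: $g_2(0,n)$ vanishes unless $n$ is a square, and evaluating the sum of $\varphi(n)/N(n)$ over square $n$ weighted by $\Phi(N(n)/Y)$ amounts to counting Gaussian integers $r$ with $N(r)$ up to roughly $Y^{1/2}$, i.e.\ lattice points in a disc. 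The error term $XY^{\theta/2}$ is Huxley's bound for the Gauss circle problem ($131/416$ is his exponent there), not a hybrid subconvex bound. Your ``standard Mellin computation'' for the main term glosses over exactly the step that produces this error, and your proposed derivation of $XY^{\theta/2}$ from subconvexity in the $k\neq 0$ range would not yield this exponent.

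Two further discrepancies. First, the paper decomposes the dual frequency $k=k_1k_2^2$ with $k_1$ squarefree, not $n=\ell r^2$: the double Dirichlet series $J_{k_1}(v,w)=\sum_{n,k_2}g_2(k_1k_2^2,n)N(n)^{-1-w}N(k_2)^{-v}$ factors as $L(\tfrac12+w,\chi_{ik_1})$ times a bounded Euler product, and the only pole (at $k_1=i^3$) contributes $\ll XY^{1/2}(Y/X)^{3/2-\varepsilon}$ --- an admissible error, not a refinement of the main term. Decomposing $n$ instead is awkward because $g_2(k,\varpi^l)$ for $l\ge 2$ depends on the $\varpi$-divisibility of $k$, so the character does not cleanly separate in your proposed form. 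Second, the term $XY^{1/2}(Y/X)^{1+\varepsilon}$ does not come from a P\'olya--Vinogradov bound on a short $k$-sum; it comes from splitting the squarefree $k_1$ at a threshold $H$, applying a mean-value estimate of quadratic large sieve type, namely $\sumstar_{N(k_1)\le X}|L(\sigma+it,\chi_{ik_1})|\ll (X(1+|t|)^{1/2})^{1+\varepsilon}$, moving the $s$-contour to two different abscissae on the two ranges, and optimizing at $H=Y/X$. As written, your proposal would not produce either stated error term, so the quantitative heart of the theorem is missing even though the architecture is sound.
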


   We remark here that the result of Conrey, Farmer and Soundararajan on $S(X,Y)$ defined in \eqref{S2} is that
\begin{align*}
   S(X,Y)= \frac 2{\pi^2}C \left(\frac {Y}{X} \right)X^{3/2}+O\left( \left( XY^{7/16}+YX^{7/16} \right) \log XY \right).
\end{align*}

   Moreover, it was shown in \cite{CFS} that
\begin{align*}
    C(\alpha) =\sqrt{\alpha}+O\left( \alpha^{3/2} \right), \quad \alpha \rightarrow 0.
\end{align*}

    It follows that when $Y < X^{1-\varepsilon}$, we can recast $S(X,Y)$ as
\begin{align*}
   S(X,Y)= \frac 2{\pi^2}XY^{1/2}+O\left( Y^{3/2} \right).
\end{align*}

    The error term given in Theorem \eqref{quadraticmean} is essentially of the same order of magnitude as that in the above expression. Moreover,
the asymptotic formula given in \eqref{S} is now valid for all $Y \leq X^{1-\varepsilon}$.

\section{Preliminaries}
\label{sec 2}
\subsection{Quadratic and quartic symbols}
\label{sec2.4}
    The symbol $\leg{\cdot}{n}_4$ is the quartic
residue symbol in the ring $\mz[i]$.  For a prime $\varpi \in \mz[i]$
with $N(\varpi) \neq 2$, the quartic symbol is defined for $a \in
\mz[i]$, $(a, \varpi)=1$ by $\leg{a}{\varpi}_4 \equiv
a^{(N(\varpi)-1)/4} \pmod{\varpi}$, with $\leg{a}{\varpi}_4 \in \{
\pm 1, \pm i \}$. When $\varpi | a$, we define
$\leg{a}{\varpi}_4 =0$.  Then the quartic character can be extended
to any composite $n$ with $(N(n), 2)=1$ multiplicatively. We extend the definition of $\leg{\cdot }{n}_4$ to $n=1$ by setting $\leg{\cdot}{1}_4=1$.
We further define $(\frac{\cdot}{n})=\leg {\cdot}{n}^2_4$  to be the quadratic
residue symbol for these $n$'s.  \newline

 Note that in $\intz[i]$, every ideal coprime to $2$ has a unique
generator congruent to 1 modulo $(1+i)^3$.  Such a generator is
called primary.

\subsection{Gauss sums}
\label{section:Gauss}
For any $n, r\in \mz[i]$, $n \equiv 1 \pmod {(1+i)^3}$, the quadratic
 Gauss sum $g_2(r, n)$ is defined by
\begin{align}
\label{g2}
 g_2(r,n) = \sum_{x \bmod{n}} \leg{x}{n} \widetilde{e}_i\leg{rx}{n}, \; \mbox{where} \;\widetilde{e}_i(z) =\exp \left( 2\pi i  \left( \frac {z}{2i} - \frac {\bar{z}}{2i} \right) \right) .
\end{align}

The following property of $g_2(r,n)$  can be easily derived from its definition.
\begin{align}
\label{eq:gmult}
 g_2(rs,n) & = \overline{\leg{s}{n}}_2 g_2(r,n), \quad (s,n)=1, \qquad \mbox{$n$ primary}.
\end{align}

   We write $N(n)$ for the norm of any $n \in \mz[i]$ and $\varphi(n)$ for the number of elements in the reduced residue class of $\mathcal{O}_K/(n)$.
The next lemma allows us to evaluate $g_2(r,n)$ for $n \equiv 1 \pmod {(1+i)^3}$ explicitly.
\begin{lemma}{\cite[Lemma 2.4]{G&Zhao4}}
\label{Gausssum}
\begin{enumerate}[(i)]
\item  For $m,n$ primary and $(m , n)=1$, we have
\begin{align*}
   g_2(k,mn) & = g_2(k,m)g_2(k,n).
\end{align*}
\item Let $\varpi$ be a primary prime in $\mz[i]$. Suppose $\varpi^{h}$ is the largest power of $\varpi$ dividing $k$. (If $k = 0$ then set $h = \infty$.) Then for $l \geq 1$,
\begin{align*}
g_2(k, \varpi^l)& =\begin{cases}
    0 \qquad & \text{if} \qquad l \leq h \qquad \text{is odd},\\
    \varphi(\varpi^l)=\#(\mz[i]/(\varpi^l))^* \qquad & \text{if} \qquad l \leq h \qquad \text{is even},\\
    -N(\varpi)^{l-1} & \text{if} \qquad l= h+1 \qquad \text{is even},\\
    \leg {ik\varpi^{-h}}{\varpi}N(\varpi)^{l-1/2} \qquad & \text{if} \qquad l= h+1 \qquad \text{is odd},\\
    0, \qquad & \text{if} \qquad l \geq h+2.
\end{cases}
\end{align*}
\end{enumerate}
\end{lemma}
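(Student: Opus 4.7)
The plan is to handle (i) via the Chinese Remainder Theorem together with the quadratic reciprocity law in $\mz[i]$, and (ii) by splitting the summation variable according to its residue class modulo $\varpi$. For part (i), I would parametrize $x\bmod mn$ through the CRT isomorphism $\mz[i]/(mn)\cong\mz[i]/(m)\times\mz[i]/(n)$, writing $x\equiv x_1 n\bar n+x_2 m\bar m\pmod{mn}$, where $\bar n,\bar m$ are multiplicative inverses of $n$ mod $m$ and of $m$ mod $n$. Since $\leg{\cdot}{mn}=\leg{\cdot}{m}\leg{\cdot}{n}$ by multiplicativity and $\widetilde{e}_i$ is both additive and $\mz[i]$-periodic, the double sum factors; after the change of variables $y_1=x_1\bar n\bmod m$, $y_2=x_2\bar m\bmod n$, one extracts the reciprocity factor and obtains
\begin{align*}
g_2(k,mn)=\leg{n}{m}\leg{m}{n}\,g_2(k,m)\,g_2(k,n).
\end{align*}
The desired identity then follows from quadratic reciprocity in $\mz[i]$: squaring Gauss's quartic reciprocity law for primary coprime $m,n$ gives $\leg{m}{n}\leg{n}{m}=1$.

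For part (ii), I would write $k=\varpi^h k'$ with $(k',\varpi)=1$ and use the key fact that $\leg{x}{\varpi^l}=\leg{x}{\varpi}^l$ depends only on $x\bmod\varpi$. Substituting $x=x_0+\varpi x_1$ with $x_0\in\mz[i]/(\varpi)$ and $x_1\in\mz[i]/(\varpi^{l-1})$, additivity of $\widetilde{e}_i$ lets the sum factor as
\begin{align*}
g_2(k,\varpi^l)=\biggl(\sum_{x_0\bmod\varpi}\leg{x_0}{\varpi}^l\widetilde{e}_i\leg{kx_0}{\varpi^l}\biggr)\biggl(\sum_{x_1\bmod\varpi^{l-1}}\widetilde{e}_i\leg{kx_1}{\varpi^{l-1}}\biggr).
\end{align*}
The inner sum over $x_1$ equals $N(\varpi)^{l-1}$ if $\varpi^{l-1}\mid k$, i.e.\ $l\leq h+1$, and vanishes otherwise, which immediately settles the case $l\geq h+2$.

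For the remaining range $l\leq h+1$, with $N(\varpi)^{l-1}$ locked in, I analyze the residue-$\varpi$ sum. If $l\leq h$, then $kx_0/\varpi^l\in\mz[i]$ so the additive factor is $1$, and $\sum_{x_0\bmod\varpi}\leg{x_0}{\varpi}^l$ equals $N(\varpi)-1$ for $l$ even (recovering $\varphi(\varpi^l)$) and $0$ for $l$ odd. For $l=h+1$ even, the symbol is trivial on units, so $\sum_{x_0\neq 0}\widetilde{e}_i(k'x_0/\varpi)=-1$, yielding $-N(\varpi)^{l-1}$. For $l=h+1$ odd, the sum collapses to $g_2(k',\varpi)=\leg{k'}{\varpi}g_2(1,\varpi)$ by \eqref{eq:gmult}, and the classical evaluation $g_2(1,\varpi)=\leg{i}{\varpi}\sqrt{N(\varpi)}$ produces $\leg{ik'}{\varpi}N(\varpi)^{l-1/2}$, matching the stated formula since $k'=k\varpi^{-h}$.

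The main obstacles I anticipate are (a) invoking the correct form of quadratic reciprocity in $\mz[i]$ for primary coprime $m,n$ in part (i), and (b) the classical evaluation of $g_2(1,\varpi)$ in terms of $\leg{i}{\varpi}$ and $\sqrt{N(\varpi)}$ for the odd $l=h+1$ case of part (ii); both are standard but rest on nontrivial input from the theory of quartic residues in the Gaussian integers.
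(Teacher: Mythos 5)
The paper does not prove this lemma at all: it is imported verbatim as \cite[Lemma 2.4]{G\&Zhao4}, so there is no in-paper argument to compare yours against. Judged on its own, your proof is correct and is the standard one for such evaluations (it is the Gaussian-field analogue of the classical computation of quadratic Gauss sums at prime powers). In part (i), the CRT parametrization $x\equiv x_1 n\bar n+x_2 m\bar m\pmod{mn}$ together with \eqref{eq:gmult} applied with $s=\bar n$ and $s=\bar m$ does produce the factor $\leg{n}{m}_2\leg{m}{n}_2$, and squaring the quartic reciprocity law for primary coprime $m,n$ kills it, since the sign $(-1)^{\frac{N(m)-1}{4}\cdot\frac{N(n)-1}{4}}$ disappears upon squaring. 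In part (ii), the splitting $x=x_0+\varpi x_1$ is legitimate because $\leg{x}{\varpi^l}_2=\leg{x_0}{\varpi}_2^l$ and $\widetilde{e}_i$ is additive with $\widetilde{e}_i(z)=1$ for $z\in\mz[i]$; the complete additive sum over $x_1$ gives the $N(\varpi)^{l-1}$ factor and the vanishing for $l\geq h+2$, and your case analysis of the $x_0$-sum matches all five lines of the statement. The only external inputs are the two you flag: quadratic reciprocity in $\mz[i]$ and the evaluation $g_2(1,\varpi)=\leg{i}{\varpi}_2 N(\varpi)^{1/2}$ for primary primes $\varpi$; both are standard (and are themselves established in the authors' cited reference), so I see no gap.
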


    As an immediate consequence of the above lemma, we see that for any $k, n \in \mz[i]$ with $n$ primary,
\begin{align}
 \label{gbound}
   g_2(k,n) \ll N(n).
\end{align}

\subsection{Poisson summation}
  The proof of Theorems \ref{quadraticmean} requires the following Poisson summation formula (see \cite{G&Zhao4} for details):
\begin{lemma}{\cite[Corollary 2.12]{G&Zhao4}}
\label{Poissonsumformodd} Let $n \in \mz[i],   n \equiv 1 \pmod {(1+i)^3}$ and $\leg {\cdot}{n}_2$  be the quadratic  residue symbol $\pmod {n}$. For any Schwartz class function $W$,  we have
\begin{align*}
   \sum_{\substack {m \in \mz[i] \\ (m,1+i)=1}}\leg {m}{n}_2 W\left(\frac {N(m)}{X}\right)=\frac {X}{2N(n)}\leg {1+i}{n}_2\sum_{k \in
   \mz[i]}(-1)^{N(k)} g_2(k,n)\widetilde{W}_i\left(\sqrt{\frac {N(k)X}{2N(n)}}\right).
\end{align*}
   where
\begin{align*}
   \widetilde{W}_i(t) &=\int\limits^{\infty}_{-\infty}\int\limits^{\infty}_{-\infty}W(N(x+yi))\widetilde{e}_i\left(- t(x+yi)\right)\dif x \dif y, \quad t \geq 0.
\end{align*}
\end{lemma}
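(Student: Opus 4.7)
The plan is to begin by applying the Poisson summation formula of Lemma \ref{Poissonsumformodd} to the inner $m$-sum in \eqref{SXY}. Since $\leg{\cdot}{n}_2$ is $\pm 1$-valued, the two factors $\leg{1+i}{n}_2$ (the one already present in \eqref{SXY} and the one produced by Poisson) multiply to $1$, leaving
\begin{align*}
  S_2(X,Y;\Phi,W) = \frac{X}{2} \sum_{n \equiv 1 \bmod (1+i)^3} \frac{\Phi(N(n)/Y)}{N(n)} \sum_{k \in \mz[i]} (-1)^{N(k)} g_2(k,n) \widetilde{W}_i\left(\sqrt{\frac{N(k)X}{2N(n)}}\right).
\end{align*}
I split into the diagonal $k = 0$ and the off-diagonal $k \neq 0$ pieces. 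For the diagonal, Lemma \ref{Gausssum} forces $n = \ell^2$ with $g_2(0, \ell^2) = \varphi(\ell^2)$; the relevant Dirichlet series $\sum_\ell \varphi(\ell^2) N(\ell)^{-2s-2}$ factors via its Euler product as $\zeta_K(2s)$ times a piece absolutely convergent for $\mathrm{Re}\, s > -1/4$, giving a simple pole at $s = 1/2$, and Mellin inversion against $\Phi$ yields the main term $C(\Phi,W)\, XY^{1/2}$ with a negligible remainder.

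For the off-diagonal contribution I would follow the multiple Dirichlet series approach of Soundararajan and Young \cite{S&Y}. The weight $\widetilde{W}_i$ restricts effectively to $N(k) \ll Y/X$, which is short in the regime $Y \le X$. Writing $k = k_1 k_2^2$ with squarefree $k_1$ and decomposing $n = n_0 n_1$ with $n_0$ squarefree and coprime to $k$, Lemma \ref{Gausssum} splits
\begin{align*}
  g_2(k,n) = N(n_0)^{1/2} \leg{k_1}{n_0}_2 \cdot H(k, n_1),
\end{align*}
where $H$ is a bounded factor supported only on primes dividing $n_1$. Introducing the Mellin transforms of $\Phi$ and of $t \mapsto \widetilde{W}_i(\sqrt{t/2})$ turns the off-diagonal sum into a double contour integral of the form
\begin{align*}
  \frac{1}{(2\pi i)^2} \int_{(c_1)}\int_{(c_2)} Z(s,w)\, F(s,w)\, X^{1-s} Y^{w}\, \dif s\, \dif w,
\end{align*}
in which $F$ collects the Mellin transforms and the requisite $\Gamma$-factors, and for each fixed squarefree $k_1$ the $n$-sum of $Z(s,w)$ contains the Hecke $L$-function $L(w, \chi_{k_1})$, where $\chi_{k_1} = \leg{\cdot}{k_1}_2$ is a primary quadratic Hecke character of $\mq(i)$.

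Quadratic reciprocity in $\mz[i]$ provides a functional-equation symmetry interchanging the roles of $k$ and $n$, giving the meromorphic continuation of $Z(s,w)$ beyond its initial region of absolute convergence. Shifting both contours to the left, the only new pole encountered corresponds to $k_1 = 1$, whose residue is absorbed into the diagonal main term already extracted. The shifted integral is then estimated using a subconvexity bound $L(1/2, \chi_{k_1}) \ll N(k_1)^{\theta/4 + \varepsilon}$ with $\theta = 131/416$, producing the first error term $XY^{\theta/2}$; the secondary error $XY^{1/2}(Y/X)^{1+\varepsilon}$ comes from a residue picked up near $w = 1$ and dominates when $Y$ is close to $X$. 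The main obstacle will be the analytic continuation of $Z(s,w)$ through quadratic reciprocity: one must carefully handle local factors at ramified and squarefull primes, track the decomposition $k = k_1 k_2^2$ together with the coprimality conditions on $n$, and verify that the chosen subconvexity input applies uniformly in the $t$-aspect at a level consistent with the short length of the $k$-sum. A subsidiary technical point is ensuring that the specific exponent $\theta = 131/416$ is indeed available for the Hecke $L$-functions $L(1/2, \chi_{k_1})$ over $\mq(i)$ and not merely for their Dirichlet counterparts.
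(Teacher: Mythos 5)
Your proposal does not prove the statement in question. The statement is Lemma \ref{Poissonsumformodd} itself --- the Poisson summation identity expressing $\sum_{(m,1+i)=1} \leg{m}{n}_2 W(N(m)/X)$ in terms of the dual sum over $k$ involving the Gauss sums $g_2(k,n)$ and the transform $\widetilde{W}_i$. Your very first sentence \emph{applies} this lemma to the $m$-sum in \eqref{SXY}, and everything that follows is a sketch of the proof of Theorem \ref{quadraticmean} (the extraction of the $k=0$ main term, the treatment of the off-diagonal terms via multiple Dirichlet series in the style of Soundararajan--Young, the choice of $\theta = 131/416$, and so on). Using the lemma as a black box cannot establish the lemma; as an argument for the stated identity it is circular, and as written it proves a different result entirely. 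For the record, the paper does not reprove this lemma either --- it cites it from an earlier work of the same authors --- but a proof would have to proceed along genuinely different lines from anything in your sketch.

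Concretely, a proof of the lemma runs as follows: split the sum over $m$ into residue classes $a$ modulo $2n$ (a modulus chosen so that both the character $\leg{\cdot}{n}_2$ and the coprimality condition $(m,1+i)=1$ are determined by the class of $m$), write $m = a + 2n\ell$ with $\ell$ ranging over $\mz[i]$, and apply two-dimensional Poisson summation over the lattice $\mz[i] \cong \mz^2$ to the $\ell$-sum. The resulting complete exponential sums over $a \bmod 2n$ factor, after detecting the condition $(a,1+i)=1$, into the quadratic Gauss sums $g_2(k,n)$ of \eqref{g2} together with the factors $(-1)^{N(k)}$ and $\leg{1+i}{n}_2$, while the archimedean integral produces $\widetilde{W}_i\bigl(\sqrt{N(k)X/(2N(n))}\bigr)$ with the normalization $X/(2N(n))$. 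None of these steps appears in your proposal. (Separately, even read as a sketch of Theorem \ref{quadraticmean}, your outline diverges from the paper: the paper does not invoke quadratic reciprocity or a subconvexity bound for $L(1/2,\chi_{k_1})$; the exponent $\theta = 131/416$ enters through the Gauss circle problem in the $k=0$ term, and the off-diagonal terms are controlled by the mean-value estimate of Lemma \ref{lem:HB} after splitting the $k_1$-sum at a parameter $H = Y/X$.)
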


\subsection{Evaluation of $\widetilde{W}_i(t)$}
\label{sect: Wiestm}
     We will require some simple estimates on $\widetilde{W}_i(t)$ and its derivatives. First note that for any $t \geq 0$, $\widetilde{W}_i(t) \in \mr$ since
\begin{align*}
     \widetilde{W}_i(t)  =\int\limits_{\mr^2}\cos (2\pi t y)W(x^2+y^2) \ \dif x \dif y.
\end{align*}

Changing to polar coordinates, we get
\begin{align*}
     \widetilde{W}_i(t) =& 4\int\limits^{\pi/2}_0\int\limits^{\infty}_0\cos (2\pi t r\sin \theta)W(r^2) \ r \dif r \dif \theta    = 2\int\limits^{\pi/2}_0 \int\limits^{\infty}_0\cos (2\pi t r^{1/2}\sin \theta)W(r) \ \dif r \dif \theta.
\end{align*}

Now apply Mellin inversion yields
\begin{align*}
     \widetilde{W}_i(t) =& 2\int\limits^{\pi/2}_0\int\limits^{\infty}_0\cos (2\pi t r^{1/2}\sin \theta)\frac 1{2\pi i}\int\limits_{(-1/2)}\widehat{W} \left( 1+\frac s2 \right)r^{-s/2}\dif s \frac {\dif r}{r} \dif \theta.
\end{align*}
Here and after, for any function $f$, its Mellin transform, $\hat{f}$, is defined to be
\begin{align*}
     \widehat{f}(s) =\int\limits^{\infty}_0f(t)t^s\frac {\dif t}{t}.
\end{align*}

  We make some changes of variables (first $r^{1/2} \to r$,
then $2 \pi tr \sin \theta \to r$ and $s \to -s$) to get that
\begin{align}
\label{FnFT}
\begin{split}
  \widetilde{W}(t)  =& \frac 4{2\pi i}
\int\limits\limits_{(1/2)}\widehat{W}(1-\frac s2)\int\limits^{\pi/2}_0
\int\limits^{\infty}_0\cos (r)\left(\frac r{2\pi t \sin \theta}\right )^{s} \frac {\dif r}{r} \dif \theta \dif s    \\
     =&
 \frac {4}{2\pi i}
 \int\limits\limits_{(1/2)}\widehat{W}(1-\frac s2)
(2\pi t)^{-s}\left ( \int\limits^{\pi/2}_0 (\sin \theta )^{-s} \dif \theta
\int\limits^{\infty}_0\cos (r)r^{s}\frac {\dif r}{r} \right ) \dif s \\
=&
 \frac {\pi}{2\pi i}
 \int\limits\limits_{(1/2)}\widehat{W}(1-\frac s2)
(\pi t)^{-s}\frac{\Gamma (s/2)}{\Gamma ((1-s)/2)} \dif s,
\end{split}
\end{align}
  where the last line above follows from the relation (see \cite[Section 2.4]{Gao2002}) that
\begin{align*}
\int\limits^{\pi/2}_0 (\sin \theta )^{-u} \dif \theta
\int\limits^{\infty}_0\cos (r)r^{u}\frac {\dif r}{r}=\frac {\pi}{2}2^{u-1}\frac{\Gamma (s/2)}{\Gamma ((1-s)/2)} .
\end{align*}

\subsection{Analytical behavior of a Dirichlet series}
\label{sect: alybehv}

   Let $g_2(k,n)$ be defined as in \eqref{g2}. We now fix a generator for every prime ideal $(\varpi) \subset \mathcal{O}_K$ by taking $\varpi$ to be primary if $(\varpi, 1+i)=1$ and $1+i$ for the ideal $(1+i)$ (noting that $(1+i)$ is the only prime ideal in $\mathcal{O}_K$ that lies above the integral ideal $(2) \subset \mz$). We also fix  $1$ as the generator for the ring $\mz[i]$ itself and
extend the choice of the generator for any ideal of $\mathcal{O}_K$ multiplicatively. We denote the set of such generators by $G$. \newline

   We say any element $k \in \mathcal{O}_K$ is square-free if the ideal $(k)$ is not divisible by the square of any prime ideal. Now, for any square-free $k_1 \in \mathcal{O}_K$, we define
\begin{align}
\label{eq:J}
  J_{k_1}(v,w) &=\sum_{n \equiv 1 \bmod {(1+i)^3}}  \sum_{\substack {k_2}}\frac {1}{N(n)^{w}N(k_2)^{2v}} \frac {g_2(k_1k^2_2,n)}{N(n)},
\end{align}
    where we use the convention throughout that all sums over $k_2$ are restricted to $k_2 \in G$. \newline

  It follows from \eqref{gbound} that $g_2(k_1k^2_2,n) \ll N(n)$, so that $J_{k_1}(v,w)$ converges absolutely if $\Re(w)$ and $\Re(v)$ are
both strictly greater than $1$.  For any Hecke character $\chi$, we write $L(s,\chi)$ for the associated $L$-function. The next lemma describes certain analytical behavior of $J_{k_1}(v,w)$.
\begin{lemma}
\label{lemma:J}
 The function $J_{k_1}(v,w)$ defined above may be written as
 \begin{align*}
 L\left( \frac 12+w, \chi_{i k_1} \right)J_{2, k_1}(v,w),
\end{align*}
 where $J_{2, k_1}(v,w)$ is a function uniformly bounded by $N(k_1)^{\varepsilon}$ for any $\varepsilon>0$ in the region
$\Re(v) \ge 1/2+\varepsilon$, and $\Re(w) \geq \varepsilon$.
\end{lemma}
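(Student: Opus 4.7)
The plan is to express $J_{k_1}(v,w)$ as an Euler product, compute each local factor in closed form using Lemma~\ref{Gausssum}, and then identify the $L$-function factor. Since $g_2(k_1 k_2^2,n)$ is multiplicative in $n$ by Lemma~\ref{Gausssum}(i), and since the sum over $k_2 \in G$ factorizes over primes (each $k_2$ decomposing uniquely as $\prod_\varpi \varpi^{b_\varpi}$), we obtain
\begin{align*}
J_{k_1}(v,w) = \prod_{\varpi} F_\varpi(v,w),
\end{align*}
where the product is over primary primes $\varpi$ and, setting $a_\varpi = v_\varpi(k_1) \in \{0,1\}$ (using squarefreeness of $k_1$),
\begin{align*}
F_\varpi(v,w) = \sum_{b \geq 0} \frac{1}{N(\varpi)^{vb}} \sum_{l \geq 0} \frac{g_2(k_1 k_2^2,\varpi^l)}{N(\varpi)^{(w+1)l}},
\end{align*}
with $g_2(k_1 k_2^2,\varpi^l)$ depending only on $a_\varpi+2b$, $l$, and the residue $\chi_{ik_1}(\varpi)$.

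Next I would apply Lemma~\ref{Gausssum}(ii) with $h = a_\varpi + 2b$. The only nonzero terms in the inner sum are the even $l \in \{0,2,\ldots,h\}$ (giving $\varphi(\varpi^l)$, or $1$ when $l=0$), together with the boundary term $l = h+1$. Summing the geometric series in $b$ yields an explicit rational function of $N(\varpi)^{-v}$ and $N(\varpi)^{-w}$. For $\varpi \nmid k_1$ the boundary contribution at the odd $l = 2b+1$ supplies the factor $\chi_{ik_1}(\varpi)N(\varpi)^{-w-1/2}$; matching this against the geometric sum of powers of $\chi_{ik_1}(\varpi)N(\varpi)^{-w-1/2}$ isolates $(1-\chi_{ik_1}(\varpi)N(\varpi)^{-w-1/2})^{-1}$, which is the local Euler factor of $L(w+1/2,\chi_{ik_1})$.

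Then I would define $J_{2,k_1}(v,w) := J_{k_1}(v,w) \cdot L(w+1/2,\chi_{ik_1})^{-1}$, so that
\begin{align*}
J_{2,k_1}(v,w) = \prod_{\varpi} F_\varpi(v,w)\,\bigl(1 - \chi_{ik_1}(\varpi) N(\varpi)^{-w-1/2}\bigr).
\end{align*}
For $\varpi \nmid k_1$, expanding the explicit closed form gives a local factor of the shape $1 + O(N(\varpi)^{-1-2\varepsilon}) + O(N(\varpi)^{-2v}) + O(N(\varpi)^{-2v-2w})$, so the infinite product over such $\varpi$ converges absolutely in the region $\Re(v) \geq 1/2+\varepsilon$, $\Re(w) \geq \varepsilon$, with a bound independent of $k_1$. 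For $\varpi \mid k_1$, $\chi_{ik_1}(\varpi)=0$ so the extracted factor is $1$, and the remaining local factor is bounded by an absolute constant $C$. Since $k_1$ is squarefree, $\omega(k_1) \ll \log N(k_1)/\log\log N(k_1)$, so the total contribution of these primes is at most $C^{\omega(k_1)} \ll N(k_1)^{\varepsilon}$.

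The main obstacle is the bookkeeping in Steps 2--3: the cases $\varpi \nmid k_1$ and $\varpi \mid k_1$ force different parities of $h$, so the closed forms for $F_\varpi$ genuinely differ, and one must carefully verify that after factoring out $L(w+1/2,\chi_{ik_1})$ the residual Dirichlet series really extends absolutely down to $\Re(v)\geq 1/2+\varepsilon$, $\Re(w)\geq \varepsilon$, rather than only to $\Re(v)>1$, $\Re(w)>0$. The gain comes from pairing the boundary term against the $L$-Euler factor so that only $\zeta_K(2v)$- and $\zeta_K(2w+1)$-type contributions remain, both of which are absolutely convergent on this range.
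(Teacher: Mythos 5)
Your proposal is correct and follows essentially the same route as the paper: Euler-product factorization via the joint multiplicativity from Lemma~\ref{Gausssum}, explicit evaluation of the local factors showing each generic factor equals $1+\chi_{ik_1}(\varpi)N(\varpi)^{-1/2-w}+O(N(\varpi)^{-1-2\varepsilon})$ in the stated region, extraction of the $L(\tfrac12+w,\chi_{ik_1})$ Euler factors, and the $C^{\omega(k_1)}\ll N(k_1)^{\varepsilon}$ bound at the primes dividing $k_1$. The only loose point is your phrase about a ``geometric sum of powers of $\chi_{ik_1}(\varpi)N(\varpi)^{-w-1/2}$'' --- the boundary terms for $b\ge 1$ carry extra factors $N(\varpi)^{-2b(v+w)}$ and so do not form such a geometric series --- but this does not affect the argument, since what you actually verify (that the local factor times $1-\chi_{ik_1}(\varpi)N(\varpi)^{-1/2-w}$ is $1+O(N(\varpi)^{-1-2\varepsilon})$) is exactly what the paper's proof uses.
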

\begin{proof}
It follows from Lemma \ref{Gausssum} that the summands on the right-hand side of \eqref{eq:J} are jointly multiplicative in terms of $n$ and $k_2$, so that $J_{k_1}(v,w)$ can be
expressed as an Euler product over all primary primes $\varpi$ with each factor at $\varpi$ being
\begin{align*}
 J_{\varpi,k_1}(v,w):=\sum_{k_2 \geq 0, n \geq 0} \frac{1}{N(\varpi)^{n w +2k_2 v}}\frac{ g_2(k_1 \varpi^{2k_2}, \varpi^{n})}{N(\varpi)^{n}  }.
\end{align*}
Applying \eqref{gbound} again, we have $J_{\varpi,k_1}(v,w) \ll 1$ uniformly for all $\varpi$. \newline

 For $\varpi \nmid k_1$, we can replace $g_2(k_1 \varpi^{2k_2}, \varpi^{n})$ in the definition of $J_{\varpi,k_1}(v,w)$ by
 an explicit evaluation of it using Lemma \ref{Gausssum}. Keeping only the non-zero terms, this yields an alternative expression for
  $J_{\varpi,k_1}(v,w)$, which we denote by
 $J^{gen}_{\varpi,k_1}(v,w)$. One checks that we have
\begin{equation*}
 J^{gen}_{\varpi,k_1}(v,w):=\sum_{k_2 \geq 0} \left ( \sum_{\substack{ n \geq 0 \\ n \equiv 0 \pmod 2 \\ n \leq 2k_2}}
 \frac{1}{N(\varpi)^{nw +2k_2 v}}\frac{ \varphi(\varpi^{n})}{N(\varpi)^{n}}+\sum_{\substack{ n \geq 0 \\ n= 2k_2+1}}
 \frac{1}{N(\varpi)^{nw +2k_2 v}}\frac{\chi_{ik_1}(\varpi)}{N(\varpi)^{1/2}  }  \right ).
\end{equation*}
 We now extend the above definition of $J^{gen}_{\varpi,k_1}(v,w)$ to all other $\varpi$ (including $\varpi=1+i$). \newline

In the region $\Re(v) \ge 1/2+\varepsilon$, $\Re(w)\ge \varepsilon$, it follows from
the definition of $J^{gen}_{\varpi,k_1}(v,w)$ that the contribution from terms $k_2\ge 1$ is $\ll 1/N(\varpi)^{1+2\varepsilon}$. The contribution of the term $k_2=0$ is $1+ \chi_{i k_1}(\varpi) N(\varpi)^{-1/2-w}$. \newline

  We now define
\begin{align*}
  J_{2, k_1}(v,w) =& \left ( L \left( \frac 12+w, \chi_{i k_1} \right)\right )^{-1}J_{k_1}(v,w)
 = J^{gen}_{2,k_1}(v,w)J^{non-gen}_{2, k_1}(v,w),
\end{align*}
  where
\[
  J^{gen}_{2,k_1}(v,w)= \prod_{\varpi}\left(1-\frac {\chi_{i k_1}(\varpi)}{N(\varpi)^{1/2+w}} \right )J^{gen}_{\varpi,k_1}(v,w), \quad
  J^{non-gen}_{2, k_1}(v,w)= \prod_{\varpi | (1+i)k_1}J^{gen}_{\varpi,k_1}(v,w)^{-1}J_{\varpi,k_1}(v,w),
\]
and we define $J_{\varpi,k_1}(v,w)=1$ when $\varpi=1+i$. \newline

  Our arguments above imply that $J^{gen}_{2,k_1}(v,w)$ is uniformly bounded by $1$ in the region
$\Re(v) \ge 1/2+\varepsilon$, and $\Re(w) \geq \varepsilon$. As one checks easily that $J^{non-gen}_{2, k_1}(v,w)$
is uniformly bounded by $N(k_1)^{\varepsilon}$ for any $\varepsilon>0$ in the same region, the assertions of the lemma now follow.
\end{proof}

\subsection{A mean value estimate}
   For $k \in \mathcal{O}_K$, recall that $\chi_{k}$ denote the quadratic symbol $\leg {k}{\cdot}_2$.
We shall use $\sum^*$ to denote a sum over square-free elements in $\mathcal{O}_K$ throughout the paper.
In the proof of Theorem \ref{quadraticmean}, we need the following mean value estimate for quadratic Hecke $L$-functions.
\begin{lemma}
\label{lem:HB}
   For any complex number $\sigma+it$  with $\sigma \geq 1/2$, we have
\begin{align}
\label{eq:Qbound}
 \sumstar_{\substack{N(k_1) \leq X}} |L(\sigma + it, \chi_{ik_1})|^2 \ll_{\varepsilon} (X(1+|t|))^{1 + \varepsilon},
\end{align}
   and that
\begin{align}
\label{eq:Lbound}
 \sumstar_{\substack{N(k_1) \leq X}} |L(\sigma + it, \chi_{ik_1})| \ll_{\varepsilon} (X(1+|t|)^{1/2})^{1 + \varepsilon}.
\end{align}
\end{lemma}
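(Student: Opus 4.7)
The plan is to imitate the well-known Heath-Brown argument for the mean square of quadratic Dirichlet $L$-functions, transported to the Gaussian integer setting. The two crucial inputs are (a) an approximate functional equation for $L(\sigma+it, \chi_{ik_1})$, and (b) a quadratic large sieve inequality for the family $\{\chi_{ik_1}\}$ with $k_1$ ranging over square-free primary elements of $\mz[i]$. For (a), the analytic conductor of $L(s,\chi_{ik_1})$ has size $q := N(k_1)(1+|t|)^2$, and a standard contour-shift argument using the Hecke functional equation yields
\begin{equation*}
L(\sigma+it, \chi_{ik_1}) = \sum_{n} \frac{\chi_{ik_1}(n)}{N(n)^{\sigma+it}} V_1\!\left(\frac{N(n)}{Y}\right) + \mathscr{X}(\sigma+it,\chi_{ik_1})\sum_{n} \frac{\chi_{ik_1}(n)}{N(n)^{1-\sigma-it}} V_2\!\left(\frac{N(n)Y}{q}\right),
\end{equation*}
for a free parameter $Y>0$, rapidly decaying smooth cut-offs $V_1, V_2$, and a root number satisfying $|\mathscr{X}|=q^{1/2-\sigma}\le 1$ when $\sigma\ge 1/2$. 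Ingredient (b) is the Gaussian analog of Heath-Brown's quadratic large sieve,
\begin{equation*}
\sumstar_{N(k_1)\leq K} \left|\sum_{N(n)\leq M} a_n \chi_{ik_1}(n)\right|^2 \ll_\varepsilon (K+M)(KM)^\varepsilon \sum_{N(n)\leq M} |a_n|^2,
\end{equation*}
which is either available in the literature on Gaussian quadratic characters or can be established by adapting Heath-Brown's original proof to $\mz[i]$.

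To prove \eqref{eq:Qbound}, I would dyadically decompose $N(k_1)\asymp K$ and, within each block, choose $Y=K^{1/2}(1+|t|)$. The two cut-offs then restrict $n$ effectively to $N(n)\ll K^{1/2}(1+|t|)$ (modulo rapidly decaying tails), so applying the large sieve separately to the two sums with $a_n = N(n)^{-\sigma-it}V_j(\cdots)$ and absorbing $|\mathscr{X}|\le 1$ into the modulus yields
\begin{equation*}
\sumstar_{N(k_1)\asymp K} |L(\sigma+it,\chi_{ik_1})|^2 \ll (K+K^{1/2}(1+|t|))(K(1+|t|))^{\varepsilon},
\end{equation*}
where I have also used the trivial bound $\sum_{N(n)\leq M} N(n)^{-2\sigma}\ll M^\varepsilon$ valid for $\sigma\ge 1/2$. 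Summing over the $O(\log X)$ dyadic blocks $K\le X$, the right side is dominated by its $K=X$ endpoint, giving $\ll(X(1+|t|))^{1+\varepsilon}$ as required.

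The first-moment estimate \eqref{eq:Lbound} follows at once from Cauchy-Schwarz and the count $\sumstar_{N(k_1)\leq X}1\ll X$ of square-free primary Gaussian integers up to $X$:
\begin{equation*}
\sumstar_{N(k_1)\leq X} |L(\sigma+it,\chi_{ik_1})| \leq X^{1/2}\biggl(\sumstar_{N(k_1)\leq X} |L(\sigma+it,\chi_{ik_1})|^2\biggr)^{1/2} \ll (X(1+|t|)^{1/2})^{1+\varepsilon}.
\end{equation*}
The main obstacle is securing the Gaussian-integer quadratic large sieve in the sharp form stated above; once this is in hand, the approximate functional equation and the dyadic analysis are entirely standard, with the root-number factor and the $k_1$-dependent length of the dual sum handled cleanly by the dyadic decomposition.
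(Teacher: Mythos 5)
Your final display is, word for word, the paper's entire proof of \eqref{eq:Lbound}: the authors obtain it from \eqref{eq:Qbound} by Cauchy's inequality. For \eqref{eq:Qbound} itself, however, the paper gives no argument at all --- it is quoted directly from \cite[Corollary 1.4]{BGL} --- whereas you have sketched the standard proof that lies behind such a citation: an approximate functional equation of analytic conductor $q=N(k_1)(1+|t|)^2$ balanced at length $q^{1/2}$, followed by the quadratic large sieve for the family $\chi_{ik_1}$ over $\mz[i]$ and a dyadic summation. That outline is sound, and your instinct that the Gaussian quadratic large sieve is ``the main obstacle'' is exactly right: it is the entire content of the matter, a Heath-Brown--type theorem over $\mq(i)$ that must be cited or proved in full rather than gestured at (it does exist in the literature on quadratic Hecke characters, and is essentially what \cite{BGL} rests on). One step needs tightening: in the dual sum the coefficients have modulus $N(n)^{-(1-\sigma)}$, so the bound $\sum_{N(n)\le M}N(n)^{-2\sigma}\ll M^{\varepsilon}$ you invoke applies only to the first sum; for $\sigma>1/2$ the dual sum contributes $\ll (K+M)M^{2\sigma-1+\varepsilon}$, and you must retain the factor $|\mathscr{X}|^2\asymp q^{1-2\sigma}$ rather than merely bounding it by $1$, since it is the identity $q^{1-2\sigma}M^{2\sigma-1}=q^{1/2-\sigma}\le 1$ (with $M\asymp q^{1/2}$) that controls that term. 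With that repaired and a concrete reference for the large sieve, your argument is complete; it buys a self-contained treatment at the cost of reproving a result the paper simply treats as known.
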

\begin{proof}
  The estimate \eqref{eq:Qbound} follows from \cite[Corollary 1.4]{BGL} and \eqref{eq:Lbound} follows from \eqref{eq:Qbound} by Cauchy's inequality.
\end{proof}

\section{Proof of Theorem \ref{quadraticmean}}
\label{sec 3}

    Applying the Poisson summation formula, Lemma \ref{Poissonsumformodd},
\begin{align*}
  S_2(X,Y; \Phi, W) &=\frac {X}{2}\sum_{k \in
   \mz[i]}(-1)^{N(k)} \sum_{n \equiv 1 \bmod {(1+i)^3}} \frac {g_2(k,n)}{N(n)} \Phi \left( \frac {N(n)}{Y} \right)\widetilde{W}_i\left(\sqrt{\frac {N(k)X}{2N(n)}}\right) \\
   & =\frac {X\widetilde{W}_i(0)}{2}\sum_{n \equiv 1 \bmod {(1+i)^3}} \frac {g_2(0,n)}{N(n)} \Phi \left( \frac {N(n)}{Y} \right) \\
   & \hspace*{2cm} +\frac {X}{2}\sum_{\substack {k \in
   \mz[i] \\ k \neq 0}}(-1)^{N(k)} \sum_{n \equiv 1 \bmod {(1+i)^3}}  \frac {g_2(k,n)}{N(n)} \Phi \left( \frac {N(n)}{Y} \right) \widetilde{W}_i\left(\sqrt{\frac {N(k)X}{2N(n)}}\right) \\
   &:= M_{0}+M', \quad \mbox{say}.
\end{align*}

  The term $M_{0}$ can be treated exactly as in \cite[Section 3.1]{G&Zhao2019} and the result there gives
\begin{align}
\label{M0}
  M_{0} = \frac {\pi XY^{1/2}}{24\zeta_{K}(2)}\widetilde{W}_i(0)\Phi \left( \frac 12 \right)+O \left( XY^{\theta/2} \right) .
\end{align}
     where $\zeta_K(s)$ is the Dedekind zeta function of $K$ and $\theta=131/416$ arises from the currently best known error term of the Gauss circle problem
     (see \cite{Huxley1}).

\subsection{The Term $M'$}
    Now suppose $k \neq 0$. Applying \eqref{FnFT} gives
\begin{align*}
    M' &=\frac {\pi}{2}X\sum_{\substack {k \in
   \mz[i] \\ k \neq 0}}(-1)^{N(k)} \sum_{n \equiv 1 \bmod {(1+i)^3}}  \frac {g_2(k,n)}{N(n)} \Phi \left( \frac {N(n)}{Y} \right)  \frac 1{2\pi i}\int\limits_{(1/2)}\widehat{W} \left( 1-\frac s2 \right)\left( \pi \sqrt{\frac {N(k)X}{2N(n)}}\right)^{-s} \frac{\Gamma (s/2)}{\Gamma ((1-s)/2)} \dif s .
\end{align*}

    We take the Mellin inversion of $\Phi$ to further recast $M'$ as
\begin{align*}
    M' &=\frac {\pi}{2}X \sum_{\substack {k \in
   \mz[i] \\ k \neq 0}}(-1)^{N(k)} \sum_{n \equiv 1 \bmod {(1+i)^3}}  \frac {g_2(k,n)}{N(n)} \\
   & \hspace*{2cm} \times \left( \frac 1{2\pi i} \right)^2\int\limits_{(1/2)}\int\limits_{(\varepsilon)}\widehat{\Phi}(u) \left( \frac {N(n)}{Y} \right)^{-u}\widehat{W} \left(1-\frac s2\right)\left(\pi \sqrt{\frac {N(k)X}{2N(n)}}\right)^{-s}\frac{\Gamma (s/2)}{\Gamma ((1-s)/2)}  \dif u \dif s \\
    &=\frac {\pi}{2}X\sum_{\substack {k \in
   \mz[i] \\ k \neq 0}}(-1)^{N(k)} \sum_{n \equiv 1 \bmod {(1+i)^3}}  \frac {g_2(k,n)}{N(n)} \\
   & \hspace*{2cm} \times \left( \frac 1{2\pi i} \right)^2\int\limits_{(3/2)}\int\limits_{(\varepsilon)}\widehat{\Phi}(u) \left( \frac {N(n)}{Y} \right)^{-u}\widehat{W} \left( 1-\frac s2 \right) \left(\pi \sqrt{\frac {N(k)X}{2N(n)}}\right)^{-s} \frac{\Gamma (s/2)}{\Gamma ((1-s)/2)} \dif u \dif s,
\end{align*}
   where the last equality above follows from the observation that the integral over $s$ may be taken over any vertical
lines with real part between $0$ and $2$. \newline

    Let $f(k)=g_2(k,n)/N(k)^s$ and we write $k = k_1k^2_2$ with $k_1$ square-free and $k_2 \in G$.  Recall here that $G$ is the set of generators of all ideals in $\mathcal{O}_K$ defined in Section \ref{sect: alybehv}. We break the sum over $k_1$ into two sums, depending on whether $k_1$ and $1+i$ are relatively prime or not, getting
\begin{align*}
   \sum_{\substack {k \in
   \mz[i] \\ k \neq 0}}(-1)^{N(k)} f(k)= & \sumstar_{\substack{k_1 \\ (k_1, 1+i) \neq 1}}\sum_{k_2}f(k_1k^2_2)+\sumstar_{\substack{k_1 \\ (k_1, 1+i) = 1}}\sum_{k_2}(-1)^{N(k_2)}f(k_1k^2_2).
\end{align*}

   Note that
\begin{align*}
   \sum_{k_2}(-1)^{N(k_2)}f(k_1k^2_2) =& \sum_{1+i|k_2}f(k_1k^2_2)-\sum_{(k_2, 1+i)=1}f(k_1k^2_2)
   = 2\sum_{1+i|k_2}f(k_1k^2_2)-\sum_{k_2}f(k_1k^2_2) \\
   =& 2\sum_{k_2}f(2ik_1k^2_2)-\sum_{k_2}f(k_1k^2_2). 
\end{align*}   
   
     It follows that we have
\begin{align*}
   \sum_{\substack {k \in
   \mz[i] \\ k \neq 0}}(-1)^{N(k)} f(k)=&  \sumstar_{\substack{k_1 \\ (k_1, 1+i) \neq 1}}\sum_{k_2}f(k_1k^2_2)+\sumstar_{\substack{k_1 \\ (k_1, 1+i) = 1}}
   \left (2\sum_{k_2}f(2ik_1k^2_2)-\sum_{k_2}f(k_1k^2_2) \right ) \\
   =&  \sumstar_{\substack{k_1 \\ (k_1, 1+i) \neq 1}}\sum_{k_2}f(k_1k^2_2)+\sumstar_{\substack{k_1 \\ (k_1, 1+i) = 1}}
   \left (2\sum_{k_2}f(2k_1k^2_2)-\sum_{k_2}f(k_1k^2_2) \right ),
\end{align*}     
   where the last equality above follows by a relabelling of $k_1$.  \newline

    Note that if $(n, 1+i)=1$, $g_2(k,n)=g_2(2k,n)$ by \eqref{eq:gmult}. It follows that we have $f(2k_1k^2_2)=4^{-s}f(k_1k^2_2)$ so that
\begin{align*}
   \sum_{\substack {k \in
   \mz[i] \\ k \neq 0}}(-1)^{N(k)} f(k)= (2^{1-2s}-1)\sumstar_{\substack{k_1 \\ (k_1, 1+i) = 1}}
   \sum_{k_2}f(k_1k^2_2) +\sumstar_{\substack{k_1 \\ (k_1, 1+i) \neq 1}}\sum_{k_2}f(k_1k^2_2).
\end{align*}

    We apply the above expression to recast $M'$ as
\begin{align}
\label{M'}
    M' &=\frac {\pi}{2}X \left (  \ \sumstar_{\substack{k_1 \\ (k_1, 1+i) = 1}} \mathcal{M}_{1}(s,u, k_1)+ \sumstar_{\substack{k_1 \\ (k_1, 1+i) \neq 1}}\mathcal{M}_{2}(s,u, k_1)\right ),
\end{align}
    where
\begin{align}
\label{eq:mathcalM1}
\begin{split}
    & \mathcal{M}_{1}(s,u, k_1) \\
    =& \left( \frac 1{2\pi i} \right)^2\int\limits_{(3/2)}\int\limits_{(\varepsilon)}\widehat{W}\left( 1-\frac s2 \right)\left( XN(k_1) \right)^{-s/2}(2^{1-2s}-1)\left(\frac {\pi}{\sqrt{2}} \right)^{-s}\frac{\Gamma (\frac s{2})}{\Gamma (\frac {1-s}{2})}\widehat{\Phi}(u)Y^u  J_{k_1} \left(\frac s2, u-\frac s2 \right) \dif u \dif s \\
    =& \left( \frac 1{2\pi i} \right)^2\int\limits_{(3/2)}\int\limits_{(3/4+\varepsilon)}\widehat{W} \left( 1-\frac s2 \right) \left( XN(k_1) \right)^{-s/2}(2^{1-2s}-1)\left(\frac {\pi}{\sqrt{2}}  \right)^{-s}
    \frac{\Gamma (\frac s{2})}{\Gamma (\frac {1-s}{2})}\widehat{\Phi} \left(u+\frac s2 \right)Y^{u+s/2} J_{k_1}(\frac s2, u) \dif u \dif s .
\end{split}
\end{align}
   and $J_{k_1}(v,w)$ is defined in \eqref{eq:J}.
   The formula for $\mathcal{M}_{2}(s,u,k_1,l)$ is identical to \eqref{eq:mathcalM1} except the factor $2^{1-2s} - 1$ is removed. \newline

    To evaluate  $\mathcal{M}_{1}(s,u, k_1)$ and $\mathcal{M}_{2}(s,u, k_1)$, we apply Lemma \ref{lemma:J} to replace $J_{k_1}(s, u)$ by $L(1/2+u, \chi_{i k_1})J_{2, k_1}(s,u)$ and
    shift the line of integration over $u$ to the line $\Re(u)=\varepsilon$. We encounter a pole of the Dirichlet $L$-functions at $u =1/2$
    for $k_1 = \pm i^3$ only. We denote the sum of the possible residues by $R$ and the remaining integrals by $I$. \newline

    To treat $R$, we move the line of integration over $s$ to the line $\Re(s)=2+\varepsilon$. We observe that as $\Phi$ and $W$ are compactly supported on $\mr^{+}$, integrating by parts implies that for any integers $E_i \geq 0$, $1 \leq i \leq 2$,
\begin{equation} \label{eq:h}
\left| \widehat {W}(s)\widehat {\Phi}(u) \right| \ll \frac{1}{|us| (1+|s|)^{E_1} (1+|u|)^{E_2}}.
\end{equation}

   We further deduce from Stirling's formula (see \cite[(5.112)]{iwakow}) that
\begin{align}
\label{gammabound}
\frac{\Gamma (s/2)}{\Gamma ((1-s)/2)} \ll |s|^{\Re(s)-1/2}.
\end{align}

    Combining \eqref{eq:h}, \eqref{gammabound} together with
    the estimation that $J_{2, \pm i^3}(s,u) \ll 1$ by Lemma \ref{lemma:J}, we estimate the remaining integral over $s$ on the line $\Re(s) = 2+\varepsilon$ to arrive at
\begin{align} 
\label{Rremainder}
 R \ll Y^{1/2} \left( \frac YX \right)^{1+\varepsilon}.
\end{align}

Now, to estimate the contribution of $I$ to \eqref{M'}, we move
the line of integration over $s$ to $\Re(s)=c=4+2\varepsilon$.
We find by Lemma \ref{lemma:J} that for any $\varepsilon>0$,
$$
J_{k_1}(s,u)
\ll (N(k_1))^{\varepsilon} \left| L\left( \frac{1}{2}+u, \chi_{i k_1} \right) \right|.
$$

Using \eqref{eq:h} with $E_1=E_2=1$ and \eqref{gammabound}, together with the above bound, 
we find that
\begin{align}
\label{eq:firstbd}
I \ll Y^{\varepsilon} \left(\frac YX \right)^{c/2}   \int\limits_{(c)}\int\limits_{(\varepsilon)}\sumstar_{N(k_1)}\frac{1}{N(k_1)^{c/2-\varepsilon}}  \left| L \left( \frac{1}{2}+u, \chi_{i k_1} \right) \right| \frac{ |s|^{\Re(s)-1/2} \ \dif u \,  \dif s}{|1-s/2|(1+|1-s/2|)|u+s/2|(1+|u+s/2|)}.
\end{align}

Now Lemma \ref{lem:HB} and partial summation give the bound
\begin{equation} \label{Lbound}
 \sumstar_{N(k_1)}\frac{1}{N(k_1)^{c/2-\varepsilon}} \left| L \left( \frac 12+u, \chi_{i k_1} \right) \right| \ll (1+|\Im(u)|)^{1/2+\varepsilon} \ll \left ((1+|u+s|)^{1/2+\varepsilon}+|s|^{1/2+\varepsilon} \right ).
\end{equation}

Inserting \eqref{Lbound} in \eqref{eq:firstbd} renders the bound
\begin{align}
\label{eq:firstbd1}
 I \ll Y^{\varepsilon} \left( \frac YX \right)^{c/2} \ll Y^{\varepsilon} \left( \frac YX \right)^{2+\varepsilon}. 
\end{align}

  As $X \geq Y$, we deduce from \eqref{M'}, \eqref{Rremainder} and \eqref{eq:firstbd1} that 
\begin{align}
\label{I2}
 M' \ll XY^{1/2}\left( \frac YX \right)^{1+\varepsilon}.
\end{align}

\subsection{Conclusion}
 Combining \eqref{M0} and \eqref{I2} gives that if $X \geq Y$, then
\begin{align*}
  S_2(X,Y; \Phi, W)=\frac {\pi XY^{1/2}}{24\zeta_{K}(2)}\widetilde{W}_i(0)\Phi \left( \frac 12 \right)+O \left( XY^{\theta/2}+XY^{1/2} \left( \frac YX \right)^{1+\varepsilon} \right).
\end{align*}
   The assertion of Theorem \ref{quadraticmean} now follows from the above expression upon setting
\begin{align}
\label{C}
   C(\Phi, W)=\frac {\pi}{24\zeta_{K}(2)}\widetilde{W}_i(0)\Phi \left( \frac 12 \right).
\end{align}

\vspace{0.1in}

\noindent{\bf Acknowledgments.} P. G. is supported in part by NSFC grant 11871082 and L. Z. by the FRG Grant PS43707 and the Goldstar Award PS53450 from the University of New South Wales (UNSW). Parts of this work were done when P. G. visited the School of Mathematics and Statistics at UNSW in September 2019.  He wishes to thank the school for the invitation, financial support and warm hospitality during his pleasant stay.  Also, the authors thank the anonymous referee for his/her
very careful reading of this manuscript and many helpful comments and suggestions.

\bibliography{biblio}
\bibliographystyle{amsxport}

\vspace*{.5cm}

\noindent\begin{tabular}{p{8cm}p{8cm}}
School of Mathematical Sciences & School of Mathematics and Statistics \\
Beihang University & University of New South Wales \\
Beijing 100191 China & Sydney NSW 2052 Australia \\
Email: {\tt penggao@buaa.edu.cn} & Email: {\tt l.zhao@unsw.edu.au} \\
\end{tabular}

\end{document}